\documentclass[11pt,reqno]{amsart}
\oddsidemargin = 0cm \evensidemargin = 0cm \textwidth = 16cm

 \usepackage{amssymb,amsfonts,amscd,amsbsy}
\usepackage[mathscr]{eucal}
\usepackage{url}

\newtheorem{thm}{Theorem}[section]
\newtheorem{lem}[thm]{Lemma}
\newtheorem{prop}[thm]{Proposition}
\newtheorem{cor}[thm]{Corollary}
\theoremstyle{definition}

\numberwithin{equation}{section}

\makeatletter
\def\imod#1{\allowbreak\mkern5mu({\operator@font mod}\,\,#1)}
\makeatother

\begin{document}

\title[The Bailey chain and mock theta functions]
{The Bailey chain and mock theta functions} 
 
\author{Jeremy Lovejoy and Robert Osburn}

\address{CNRS, LIAFA, Universit{\'e} Denis Diderot - Paris 7, Case 7014, 75205 Paris Cedex 13, FRANCE}

\address{School of Mathematical Sciences, University College Dublin, Belfield, Dublin 4, Ireland}

\address{IH{\'E}S, Le Bois-Marie, 35, route de Chartres, F-91440 Bures-sur-Yvette, FRANCE}

\email{lovejoy@liafa.jussieu.fr}

\email{robert.osburn@ucd.ie, osburn@ihes.fr}

\subjclass[2010]{Primary: 33D15; Secondary: 05A30, 11F03, 11F37}
\keywords{}

\date{\today}

\begin{abstract}
Standard applications of the Bailey chain preserve mixed mock modularity but not mock modularity.   After illustrating this with some examples, we show how to use a change of base in Bailey pairs due to Bressoud, Ismail and Stanton to explicitly construct families of $q$-hypergeometric multisums which are mock theta functions.  We also prove identities involving some of these multisums and certain classical mock theta functions.
\end{abstract}

\maketitle

\section{Introduction}

In his plenary address at the Millennial Conference on Number Theory on May 22, 2000, George Andrews challenged mathematicians in the 21th century to elucidate the overlap between classes of $q$-series and modular forms. This challenge has its origin in Ramanujan's last letter to G. H. Hardy on January 12, 1920. In this letter, he introduces 17 ``mock theta functions" such as

\begin{equation} \label{F}
\mathcal{F}_1(q) := \sum_{n \geq 1} \frac{q^{n^2}}{(q^n)_n}.
\end{equation}

\noindent Here, we have used the standard $q$-hypergeometric notation,

\begin{equation*}
(a)_n = (a;q)_n = (1-a)(1-aq)\cdots(1-aq^{n-1}).
\end{equation*}

\noindent Between the time of Ramanujan's death in 1920 and the early part of the $21$st century, approximately 35 other $q$-series were studied and deemed mock theta functions.  Some were introduced by Watson \cite{Wa1}, some were found in Ramanujan's lost notebook and studied by Andrews, Choi, and Hickerson \cite{An-Hi1,Ch1,Ch2,Ch3,Ch4,Ch5}, and others were produced by Berndt, Chan, Gordon and McIntosh using intuition from $q$-series \cite{bc,Go-Mc1,Mc1}.   For a summary of this classical work, see \cite{Go-Mc2} or \cite{Hi-Mo1}. 

Thanks to work of Zwegers and Bringmann and Ono, we now know that each of Ramanujan's original 17 (and the subsequent) examples of mock theta functions is the holomorphic part of a weight $1/2$ harmonic weak Maass form with a weight $3/2$ unary theta function as its ``shadow". Following Zagier, the holomorphic part of any weight $k$ harmonic weak Maass form is called a mock modular form of weight $k$.  It is called a mock theta function if $k=1/2$ and the shadow is a unary theta function.  For more on these functions, their remarkable history and modern developments, see \cite{On1} and \cite{Za1}. 

Returning to Andrews' challenge, a natural question is whether or not there exist other examples of $q$-hypergeometric series which are mock theta functions (in the modern sense).  Several authors have recently addressed this question, constructing two-variable $q$-series which are essentially ``mock Jacobi forms" and which then specialize at torsion points to mock theta functions.  See \cite{Al-Br-Lo1,Br-Lo1,Br-On1,Br-On-Rh1,Ka1,Za1}, for example.  In this paper we investigate the mock modularity of $q$-hypergeometric multisums constructed using the Bailey chain.  

We briefly review Bailey pairs and the Bailey chain.   In the $1940$'s and $50$'s, Bailey and Slater made extensive use of the the fact that if $(\alpha_n,\beta_n)_{n \geq 0}$ 
is a pair of sequences satisfying

\begin{equation} \label{pairdef}
\beta_n = \sum_{k=0}^n \frac{\alpha_k}{(q)_{n-k}(aq)_{n+k}}, 
\end{equation} 

\noindent then subject to convergence conditions we have the identity 

\begin{equation} \label{limitBailey}
\sum_{n \geq 0} (b)_n(c)_n (aq/bc)^n \beta_n = \frac{(aq/b)_{\infty}(aq/c)_{\infty}}{(aq)_{\infty}(aq/bc)_{\infty}} \sum_{n \geq 0} \frac{(b)_n(c)_n(aq/bc)^n }{(aq/b)_n(aq/c)_n}\alpha_n,
\end{equation} 

\noindent where

\begin{equation*}
(a)_{\infty} = (a;q)_{\infty} = \prod_{i=0}^{\infty} (1 - a q^{i}).
\end{equation*}

\noindent For example, Slater \cite{Sl1} collected a long list of pairs satisfying \eqref{pairdef} and a corresponding compendium \cite{Sl2} of $130$ identities of the Rogers-Ramanujan type.  Such identities are best exemplified by the Rogers-Ramanujan identities themselves, which state that for $s=0$ or $1$ we have

\begin{equation} \label{rr}
\sum_{n \geq 0} \frac{q^{n^2+sn}}{(q)_n} = \frac{1}{(q^{1+s};q^5)_{\infty}(q^{4-s};q^5)_{\infty}}.
\end{equation}
In other words, we have a $q$-hypergeometric series expressed as a modular function.

In the $1980$'s Andrews observed that Bailey's work actually leads to a mechanism for producing new pairs satisfying \eqref{pairdef} from known ones \cite{An1,An2}. He called a pair of sequences $(\alpha_n,\beta_n)_{n \geq 0}$ satisfying \eqref{pairdef} a Bailey pair relative to $a$ and showed that if $(\alpha_n,\beta_n)$ is such a sequence, then so is $(\alpha'_n,\beta'_n)$, where

\begin{equation} \label{alphaprimedef}
\alpha'_n = \frac{(b)_n(c)_n(aq/bc)^n}{(aq/b)_n(aq/c)_n}\alpha_n
\end{equation} 

\noindent and

\begin{equation} \label{betaprimedef}
\beta'_n = \sum_{k=0}^n\frac{(b)_k(c)_k(aq/bc)_{n-k} (aq/bc)^k}{(aq/b)_n(aq/c)_n(q)_{n-k}} \beta_k.
\end{equation}
Iterating \eqref{alphaprimedef} and \eqref{betaprimedef} leads to a sequence of Bailey pairs, called the Bailey chain.  

To give an illustration, we follow Chapter $3$ of \cite{An2}.  First, take the so-called unit Bailey pair relative to $a$,

\begin{equation} \label{unit1}
\alpha_n = \frac{(a)_n(1-aq^{2n})(-1)^nq^{\binom{n}{2}}}{(q)_n(1-a)}
\end{equation} 

\noindent and 

\begin{equation} \label{unit2}
\beta_n = \chi(n=0).
\end{equation}

\noindent Then, setting $a=1$ and iterating along the Bailey chain with $b,c \to \infty$ at each step, we arrive at the following generalization of the $s=0$ case of the Rogers-Ramanujan identities \eqref{rr}:

\begin{equation*} \label{Andrews-Gordon}
\begin{aligned}
\sum_{n_{k-1} \geq n_{k-2} \geq \cdots \geq n_1 \geq 0} \frac{q^{n_{k-1}^2 + n_{k-2}^2 + \cdots + n_1^2}}{(q)_{n_{k-1}-n_{k-2}} \cdots (q)_{n_2-n_1}(q)_{n_1}} &= \frac{1}{(q)_{\infty}} \sum_{n \in \mathbb{Z}} (-1)^nq^{kn^2+ \binom{n+1}{2}} \\
&= \frac{(q^k;q^{2k+1})_{\infty}(q^{k+1};q^{2k+1})_{\infty}(q^{2k+1};q^{2k+1})_{\infty}}{(q)_{\infty}}, 
\end{aligned}
\end{equation*} 

\noindent the last equality following from the triple product identity, 

\begin{equation*} \label{jtp}
\sum_{n \in \mathbb{Z}} z^nq^{n^2} = (-zq;q^2)_{\infty}(-q/z;q^2)_{\infty}(q^2;q^2)_{\infty}.
\end{equation*}

The point is that iteration along the Bailey chain preserves the number-theoretic structure on the $\alpha$-side, and now instead of each Bailey pair giving rise to a single modular $q$-hypergeometric series, each pair leads to a family of modular $q$-hypergeometric multisums.   As a bonus, these multisums naturally occur in many areas of mathematics.   For references to the role of such series in combinatorics, statistical mechanics, Lie algebras, and group theory, see \cite{Fu1}.  For novel interactions with knot theory, see \cite{Ar-Da1} and \cite{gl}.

Now consider an example involving mock theta functions. Take the Bailey pair

\begin{equation} \label{alphaf}
\alpha_n = 
\begin{cases}
1, & \text{if $n=0$},\\
\frac{4(-1)^nq^{\binom{n+1}{2}}}{(1+q^n)}, & \text{otherwise},
\end{cases}
\end{equation}  

\noindent and

\begin{equation} \label{betaf}
\beta_n  = \frac{1}{(-q)_n^2},
\end{equation}

\noindent which follows directly upon substituting \eqref{unit1} and \eqref{unit2} into \eqref{alphaprimedef} and \eqref{betaprimedef} with $-a=b=c=-1$. Iteration along the Bailey chain with $b,c \to \infty$ at each step gives

\begin{equation} \label{generalizedfofq}
\sum_{n_k \geq n_{k-1} \geq \cdots \geq n_1 \geq 0} \frac{q^{n_k^2+n_{k-1}^2+\cdots+n_1^2}}{(q)_{n_k-n_{k-1}}\cdots(q)_{n_2-n_1}(-q)_{n_1}^2} = \frac{2}{(q)_{\infty}}\sum_{n \in \mathbb{Z}} \frac{q^{kn^2+\binom{n+1}{2}}(-1)^n}{(1+q^n)}.
\end{equation}  

\noindent The case $k=1$ of \eqref{generalizedfofq} is Watson's expression for Ramanujan's third order mock theta function $f(q)$ as an Appell-Lerch sum \cite{Wa1}.  For general $k$ the left-hand side may be interpreted as a generating function for partitions weighted according to certain ranks \cite{Ga1}.   However, the sums on the right-hand side of \eqref{generalizedfofq} are known as ``higher level" Appell functions \cite{Zw2,Zw3} and in general give rise not to mock but to \emph{mixed} mock modular forms, that is, sums of the form $\sum_{i=1}^n f_ig_i$, where $f_i$ is modular and $g_i$ is mock modular.  In other words, it appears that standard applications of the Bailey chain preserve the space of mixed mock modular forms, but typically fail to produce families of mock theta functions.  In Section $3$, we discuss \eqref{generalizedfofq} and another example in detail.

Now, mixed mock modular forms are certainly interesting and important functions.  They have recently appeared as characters arising from affine Lie superalgebras \cite{BO1}, as generating functions for exact formulas for the Euler numbers of certain moduli spaces \cite{bm}, as generating functions for Joyce invariants \cite{Me-Ok1}, in the quantum theory of black holes and wall-crossing phenomenon \cite{dmz}, in relation to other automorphic objects \cite{bk,cr}, and in the combinatorial setting of $q$-series and partitions (e.g. \cite{andrewsshort,andrewsrep,arz,bhmv,bma,rhoades}). 

But what about the special structure of ``pure" mock modular forms?   We shall observe that it is possible to preserve the mock modularity in $q$-hypergeometric multisums constructed using the Bailey machinery, and one way to do so is to appeal to change of base formulas like those in \cite{Be-Wa1,bis,St1}.  We make use of the following change of base due to Bressoud, Ismail, and Stanton:

\begin{lem}{\cite[Theorem 2.5, $a=q$ and $B \to \infty$]{bis}} If $(\alpha_n,\beta_n)$ is a Bailey pair relative to $q$, so is $(\alpha'_n,\beta'_n)$ where

\begin{equation}  \label{alphabase}
\alpha'_n = \frac{(1+q)}{(1+q^{2n+1})}q^n\alpha_n(q^2) 
\end{equation}

\noindent and

\begin{equation} \label{betabase}
\beta'_n = \sum_{k = 0}^n \frac{(-q)_{2k}q^k}{(q^2;q^2)_{n-k}}\beta_k(q^2).
\end{equation}            
\end{lem}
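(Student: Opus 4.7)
The plan is to verify the Bailey pair relation
\[
\beta'_n = \sum_{k=0}^n \frac{\alpha'_k}{(q)_{n-k}(q^2)_{n+k}}
\]
directly from the definitions. Substituting $q\mapsto q^2$ in \eqref{pairdef} with $a=q$ gives the base-$q^2$ Bailey pair relation
\[
\beta_k(q^2) = \sum_{j=0}^k \frac{\alpha_j(q^2)}{(q^2;q^2)_{k-j}(q^4;q^2)_{k+j}}.
\]
Plug this into \eqref{betabase} and interchange the order of summation to obtain
\[
\beta'_n = \sum_{j=0}^n \alpha_j(q^2) \sum_{k=j}^n \frac{(-q)_{2k}\,q^k}{(q^2;q^2)_{n-k}(q^2;q^2)_{k-j}(q^4;q^2)_{k+j}}.
\]
Since the sequence $(\alpha_j(q^2))$ is arbitrary, comparing with the target expression $\sum_j \alpha'_j/[(q)_{n-j}(q^2)_{n+j}]$ via \eqref{alphabase} reduces the lemma to the scalar identity
\[
\sum_{k=j}^n \frac{(-q)_{2k}\,q^k}{(q^2;q^2)_{n-k}(q^2;q^2)_{k-j}(q^4;q^2)_{k+j}} = \frac{(1+q)\,q^j}{(1+q^{2j+1})(q)_{n-j}(q^2)_{n+j}}
\]
for each $0 \le j \le n$.

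To prove this, I would reindex $k = m+j$, set $N := n-j$ and $x := q^{2j+1}$, and extract the $j$-dependent prefactors using $(-q)_{2m+2j} = (-q)_{2j}(-x;q)_{2m}$ and $(q^4;q^2)_{m+2j} = (q^4;q^2)_{2j}(x^2q^2;q^2)_m$. With the help of the elementary factorizations $(x^2q^2;q^2)_m = (xq;q)_m(-xq;q)_m$ and $(q^2;q^2)_N = (q;q)_N(-q;q)_N$, the identity is equivalent to
\[
\sum_{m=0}^N \begin{bmatrix}N\\m\end{bmatrix}_{q^2} \frac{(-x;q)_{2m}\,q^m}{(x^2q^2;q^2)_m} = \frac{(-q;q)_N}{(xq;q)_N},
\]
a single-variable terminating $q$-identity valid for every $x$. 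This is the heart of the argument. One proves it by induction on $N$: the base case $N = 0$ is trivial, and the inductive step amounts to verifying that both sides satisfy the first-order recurrence $F_N(x)(1-xq^N) = (1+q^N)F_{N-1}(x)$, which for the left-hand side follows from the $q^2$-Pascal recurrence $\begin{bmatrix}N\\m\end{bmatrix}_{q^2} = \begin{bmatrix}N-1\\m\end{bmatrix}_{q^2} + q^{2(N-m)}\begin{bmatrix}N-1\\m-1\end{bmatrix}_{q^2}$, a reindexing of the summation, and routine $q$-Pochhammer manipulations.

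The principal obstacle is this single-variable $q$-summation; all other steps (substitution, interchange of summation, coefficient matching, and $q$-bookkeeping) are mechanical. Once the scalar identity is established, the lemma follows immediately.
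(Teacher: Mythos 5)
Your overall strategy is reasonable, and it is worth noting that the paper gives no proof at all of this lemma: it is quoted directly from Bressoud--Ismail--Stanton \cite{bis} (Theorem 2.5 with $a=q$, $B\to\infty$), so a self-contained verification like yours is a genuinely different route. Your reduction is also correct: substituting the base-$q^2$ Bailey relation into \eqref{betabase}, interchanging sums, and matching coefficients of $\alpha_j(q^2)$ does reduce the lemma to the scalar identity you display, and the bookkeeping checks out --- using $(q^2;q^2)_N=(q)_N(-q)_N$ and $\frac{(-q)_{2j}}{(q^4;q^2)_{2j}}=\frac{1+q}{(1+q^{2j+1})(q^2;q)_{2j}}$, your single-variable identity with $x=q^{2j+1}$ is exactly equivalent to the required coefficient identity. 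Moreover that identity is true: at $x=0$ it degenerates to the classical evaluation $\sum_m \genfrac{[}{]}{0pt}{}{N}{m}_{q^2}q^m=(-q)_N$, and it checks for small $N$ at general $x$.

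The gap is precisely at the step you yourself call the heart of the argument. The first-order recurrence $F_N(x)(1-xq^N)=(1+q^N)F_{N-1}(x)$ does hold for the closed form on the right, but it does not follow for the sum side from the $q^2$-Pascal rule plus reindexing. Writing $c_m=\frac{(-x;q)_{2m}q^m}{(x^2q^2;q^2)_m}$, the Pascal step followed by the shift $m\to m+1$ forces the ratio
\[
\frac{c_{m+1}}{c_m}=\frac{q\,(1+xq^{2m})(1+xq^{2m+1})}{(1-xq^{m+1})(1+xq^{m+1})}
\]
into the summand; the numerator lives in ``base $q^2$'' (exponents $2m$, $2m+1$) while the denominator lives in ``base $q$'' (exponent $m+1$), so nothing cancels against $(1\pm xq^{N})$ and what remains is another nontrivial summation identity of the same shape, not a telescoping or a routine Pochhammer manipulation. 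This base mismatch is exactly why the lemma is a \emph{change-of-base} result: your scalar identity is a limiting case of a quadratic transformation for basic hypergeometric series, which is the actual content of Theorem 2.5 of \cite{bis}. To close the argument you would need to invoke such a transformation, or prove the identity by a polynomial argument (both sides times $(xq;q)_N$ are polynomials in $x$ of degree at most $N$, so $N+1$ well-chosen evaluations suffice), or produce a genuine $q$-Zeilberger certificate; as written, the inductive step does not go through.
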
  

We present four examples in our main result.

\begin{thm} \label{main} 
Write

\begin{equation*} \label{Bn}
\begin{aligned}
B_k&(n_k,n_{k-1},\dots\,n_1;q) := q^{\binom{n_{k-1}+1}{2} + n_{k-2} + 2n_{k-3} + \cdots + 2^{k-3}n_1} (-1)^{n_1} \\
&\times \frac{(-q)_{n_{k-1}}(-q)_{2n_{k-2}}(-q^2;q^2)_{2n_{k-3}}\cdots(-q^{2^{k-3}};q^{2^{k-3}})_{2n_1}}{(q)_{n_k-n_{k-1}}(q^2;q^2)_{n_{k-1}-n_{k-2}} \cdots (q^{2^{k-2}};q^{2^{k-2}})_{n_2-n_1}(q^{2^{k-1}};q^{2^{k-1}})_{n_1}}.
\end{aligned}
\end{equation*}

\noindent For $k \geq 3$ the following are mock theta functions:

\begin{eqnarray} 
\mathcal{R}_1^{(k)}(q) &:=& \sum_{n_k \geq n_{k-1} \geq \dotsc \geq n_1 \geq 0} 
q^{\binom{n_k+1}{2}} B_k(n_k,\dots,n_1;q), \label{main1} \\
\mathcal{R}_2^{(k)}(q) &:=& \sum_{n_k \geq n_{k-1} \geq \dotsc \geq n_1 \geq 0}  
\frac{q^{n_k^2 + n_k}}{(-q)_{n_k}}B_k(n_k,\dots,n_1;q), \label{main2} \\
\mathcal{R}_3^{(k)}(q) &:=& \sum_{n_k \geq n_{k-1} \geq \dotsc \geq n_1 \geq 0}  
\frac{q^{n_k^2+2n_k} (-1)^{n_k}(q;q^2)_{n_k}}{(-q^2;q^2)_{n_k}}B_k(n_k,\dots,n_1;q^2), \label{main3} \\
\mathcal{R}_4^{(k)}(q) &:=& \sum_{n_k \geq n_{k-1} \geq \dotsc \geq n_1 \geq 0}  \frac{q^{n_k} (-1)^{n_k}(q;q^2)_{n_k}}{(-q)_{n_k}}B_k(n_k,\dots,n_1;q). \label{main4}
\end{eqnarray}
\end{thm}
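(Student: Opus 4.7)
The plan is, for each $i\in\{1,2,3,4\}$, to obtain $\mathcal{R}_i^{(k)}(q)$ as the $\beta$-side of a Bailey-type identity derived by iterating the Bressoud-Ismail-Stanton change of base (the lemma above) $k-2$ times, starting from a carefully chosen seed Bailey pair $(\alpha_n^{(i)},\beta_n^{(i)})$ relative to $q$ (relative to $q^2$ for $i=3$), and then performing one standard Bailey chain step followed by one application of \eqref{limitBailey}. The seed for each $i$ will be picked so that its one-sum associated Bailey identity expresses a classical mock theta function as an Appell-Lerch sum; such seeds are available in Slater's list and in the Appell-Lerch catalogues of Hickerson, Mortenson, and Lovejoy.

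Concretely, after iterating BIS $k-2$ times, one gets a pair $(\alpha^{(i,k-2)}_n,\beta^{(i,k-2)}_n)$ relative to $q$ in which $\beta^{(i,k-2)}_n$ is the inner $(k-2)$-fold nested sum matching the inside of $B_k$ at indices $n_{k-2},\ldots,n_1$, while $\alpha^{(i,k-2)}_n$ is an explicit telescoping rational function in $q^n$ times $\alpha_n^{(i)}(q^{2^{k-2}})$. Applying one further Bailey chain step with $a=q$, $b=-q$, $c\to\infty$ produces the $(-q)_{n_{k-1}}q^{\binom{n_{k-1}+1}{2}}/(q)_{n_k-n_{k-1}}$ factor appearing in $B_k$ at the $n_{k-1}$ level. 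Finally one applies \eqref{limitBailey} with parameter limits tailored to each case: $b=-q$, $c\to\infty$ for $\mathcal{R}_1^{(k)}$ (yielding the outer weight $q^{\binom{n_k+1}{2}}$); $b,c\to\infty$ for $\mathcal{R}_2^{(k)}$ (producing $q^{n_k^2+n_k}/(-q)_{n_k}$); and analogous choices for $i=3,4$ producing the outer factors in \eqref{main3} and \eqref{main4}.

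The resulting identity has the shape
\[
\mathcal{R}_i^{(k)}(q) \;=\; M_i(q)\,\mathcal{A}_i(q),
\]
with $M_i(q)$ an infinite-product modular form and $\mathcal{A}_i(q)$ an Appell-Lerch sum in base $q^{2^{k-2}}$ arising from the seed's $\alpha$-side after summation. Since Appell-Lerch sums are mock modular forms with unary theta function shadows (Zwegers; Hickerson-Mortenson), and the substitution $q\mapsto q^{2^{k-2}}$ preserves this structure up to a scaling of the shadow, the mock-theta conclusion follows.

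The main obstacle is the inductive verification that iterating BIS yields precisely the doubled bases $(q^{2^j};q^{2^j})$ and the geometric progression $1,2,4,\ldots,2^{k-3}$ of exponents on $n_{k-2},\ldots,n_1$ in $B_k$; this requires careful bookkeeping of each $q\mapsto q^2$ substitution built into BIS as it propagates through nested Pochhammer symbols $(-q^{2^j};q^{2^j})_{2n_{k-2-j}}$. Choosing the seed and the final-step limits for each $i$ is routine once one knows the target formulas \eqref{main1}--\eqref{main4}.
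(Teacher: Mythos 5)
Your overall architecture --- a seed Bailey pair relative to $q$, iterated $k-2$ times through the Bressoud--Ismail--Stanton change of base, followed by one ordinary Bailey chain step with $b=-q$, $c\to\infty$, and a final application of \eqref{limitBailey} with case-dependent specializations of $b$ and $c$ --- is exactly the paper's (its Proposition 4.1 packages the iteration, with the seed $\alpha_n = \frac{q^{n^2}(1-q^{2n+1})}{1-q}\sum_{|j|\le n}(-1)^jq^{-j^2}$, $\beta_n = (-1)^n/(q^2;q^2)_n$ taken from Andrews--Hickerson). The bookkeeping you flag as the ``main obstacle'' is carried out explicitly there and is indeed routine.

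The genuine gap is in your last step. You conclude from an identity of the shape $\mathcal{R}_i^{(k)}(q)=M_i(q)\,\mathcal{A}_i(q)$, with $M_i$ modular and $\mathcal{A}_i$ an Appell--Lerch sum, that $\mathcal{R}_i^{(k)}$ is a mock theta function. That inference fails: a modular form times a mock modular form is a \emph{mixed} mock modular form, and separating these from genuine mock theta functions is the entire point of the theorem --- Section 3 of the paper exhibits multisums $\mathcal{B}_1^{(k)}$ and $\mathcal{B}_2^{(k)}$ of precisely your claimed shape which are mock only ``accidentally'' at $k=1$. The paper's argument differs in two essential respects. First, the seed's $\alpha$-side is a Hecke-type sum $\sum_{|j|\le n}$, so after \eqref{limitBailey} and the substitution $n=(r+s)/2$, $j=(r-s)/2$ one obtains $M_i(q)$ times an indefinite theta series $f_{n,n+p,n}(x,y,q)$, not directly an Appell--Lerch sum. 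Second, and crucially, applying the Hickerson--Mortenson decomposition $f_{n,n+p,n}=g_{n,n+p,n}+\theta_{n,p}$, each $m(\cdot,\cdot,\cdot)$ inside $g_{n,n+p,n}$ carries a theta-function coefficient ($j(q,q^2)$, $j(q,q^3)$, or $j(-1,q)$) which cancels the modular prefactor exactly --- for instance $(-q)_\infty/(q)_\infty = 1/j(q,q^2)$ in the case of $\mathcal{R}_1^{(k)}$ --- leaving a bare Appell--Lerch sum plus a weakly holomorphic modular form. Without verifying this cancellation, which depends delicately on the choice of seed and of the limits taken in \eqref{limitBailey}, your argument establishes only mixed mock modularity, which is the generic outcome and not the claim.
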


In order to satisfy the claim that the above series are mock theta functions, we will express them, up to the addition of weakly holomorphic modular forms, as specializations of Appell-Lerch sums $m(x,q,z)$, where

\begin{equation*} 
m(x,q,z) := \frac{1}{j(z,q)} \sum_{r \in \mathbb{Z}} \frac{(-1)^r q^{\binom{r}{2}} z^r}{1-q^{r-1} xz}.
\end{equation*}

\noindent Here $x$, $z \in \mathbb{C}^{*}:=\mathbb{C} \setminus \{ 0 \}$ with neither $z$ nor $xz$ an integral power of $q$, and 

$$
j(x,q):=(x)_{\infty} (q/x)_{\infty} (q)_{\infty}.
$$

\noindent The fact that specializations of Appell-Lerch sums give mock theta functions essentially follows from work of Zwegers \cite[Ch. 1]{Zw1}.   We note that although Zwegers' results are expressed in terms of his mock Jacobi form $\mu(u,v,\tau)$, one can easily translate $\mu(u,v,\tau)$ into $m(x, q, z)$ and vice versa. 

We proceed by first using the Bailey machinery to express the multisums in terms of indefinite theta series $f_{a,b,c}(x,y,q)$, where

\begin{equation} \label{fdef}
f_{a,b,c}(x,y,q) : = \sum_{\text{sg}(r) = \text{sg}(s)} \text{sg}(r) (-1)^{r+s} x^r y^s q^{a \binom{r}{2} + brs + c \binom{s}{2}}.
\end{equation}

\noindent Here $x$, $y \in \mathbb{C}^{*}$ and sg$(r):=1$ for $r \geq 0$ and sg$(r):=-1$ for $r <0$.  One could then follow Chapter 2 of \cite{Zw1}, but instead we apply recent results of Hickerson and Mortenson \cite{Hi-Mo1} to convert the indefinite theta series to Appell-Lerch series (see equations \eqref{casei}, \eqref{caseii}, \eqref{caseiibis}, \eqref{caseiii}, and \eqref{caseiv}). Some background material on indefinite theta functions and Appell-Lerch series is collected in Section 2, and Theorem \ref{main} is established in Section 4.   

In Section 5, we prove identities between some of the multisums in Theorem \ref{main} and some of the classical $q$-hypergeometric mock theta functions.  Recall \eqref{F} as well as the mock theta functions $\nu(q)$, $\phi(q)$, and $\mu(q)$ (historically referred to as having ``orders" $7$, $3$, $10$, and $2$, respectively):

\begin{equation} \label{nu}
\nu(q) := \sum_{n \geq 0} \frac{q^{n^2+n}}{(-q;q^2)_{n+1}},
\end{equation} 

\begin{equation} \label{phi}
\phi(q) := \sum_{n \geq 0} \frac{q^{\binom{n+1}{2}}}{(q;q^2)_{n+1}},
\end{equation}   

\noindent and

\begin{equation} \label{mu}
\mu(q) := \sum_{n \geq 0} \frac{(-1)^nq^{n^2}(q;q^2)_n}{(-q^2;q^2)_n^2}.
\end{equation}

\begin{cor} \label{mockid} We have the following identities.

\begin{equation} \label{mockid1}
\mathcal{R}_1^{(3)}(q) = \nu(-q),
\end{equation}

\begin{equation} \label{mockid2}
\mathcal{R}_1^{(4)}(q)  = -\phi(q^4) + M_1(q),
\end{equation}

\begin{equation} \label{mockid3}
\mathcal{R}_2^{(3)}(q) = q^{-1} \mathcal{F}_{1}(q^4) + M_2(q),
\end{equation}

\begin{equation} \label{mockid5}
\mathcal{R}_4^{(k)}(q) = q^{-2^{k-3}(2^{k-2} + 1)} \mu(q^{2^{k-1}(2^{k-1} + 1)}) + M_3^{(k)}(q),
\end{equation}

\noindent where $M_1(q)$, $M_2(q)$, and $M_3^{(k)}(q)$ are (explicit) weakly holomorphic modular forms.
\end{cor}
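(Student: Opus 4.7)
The plan is to derive Corollary \ref{mockid} directly from the Appell-Lerch representations of $\mathcal{R}_i^{(k)}(q)$ produced in the proof of Theorem \ref{main} in Section 4. That proof expresses each $\mathcal{R}_i^{(k)}(q)$ as a specific combination of values of $m(x,q,z)$ plus a weakly holomorphic modular form, obtained by feeding the indefinite theta function $f_{a,b,c}(x,y,q)$ through the Hickerson-Mortenson conversion \cite{Hi-Mo1}. On the other hand, the classical mock theta functions $\nu$, $\phi$, $\mathcal{F}_1$, and $\mu$ have their own standard Appell-Lerch representations, also available in \cite{Hi-Mo1}. The strategy is to bring both sides into a common Appell-Lerch form and read off the difference as a weakly holomorphic modular form.

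Concretely, I would first retrieve the standard expressions for $\nu(q)$, $\phi(q)$, $\mathcal{F}_1(q)$, and $\mu(q)$ as $c \cdot m(x_0,q_0,z_0)$ plus a theta quotient, and apply the base changes $q \mapsto -q$, $q \mapsto q^4$, and $q \mapsto q^{2^{k-1}(2^{k-1}+1)}$ dictated by the right-hand sides of \eqref{mockid1}--\eqref{mockid5}. Next I would massage the Appell-Lerch forms of $\mathcal{R}_i^{(k)}(q)$ coming from Section 4 using the three standard transformation properties of $m(x,q,z)$: change of the auxiliary parameter $z \mapsto z'$, the shift $x \mapsto q^n x$, and the inversion $(x,z) \mapsto (x^{-1},z^{-1})$. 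Each of these modifies $m$ by an explicit theta quotient, and a suitable sequence of such moves should reduce the $m$-function in each $\mathcal{R}_i^{(k)}(q)$ to exactly the $m(x_0, q_0, z_0)$ that appears in the associated classical mock theta function. The leftover theta quotients then define $M_1(q)$, $M_2(q)$, and $M_3^{(k)}(q)$ in the latter three identities, and must cancel outright in \eqref{mockid1}.

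The main obstacle will be \eqref{mockid5}, which must be established uniformly in $k$. Here one needs to trace how $k-2$ successive applications of the Bressoud-Ismail-Stanton change of base accumulate in the argument of $m$, match the resulting $q$-power against the explicit prefactor $q^{-2^{k-3}(2^{k-2}+1)}$ via the shift rule, and verify that the accumulated theta quotients combine into a genuinely weakly holomorphic modular form with no residual mock contamination. Identity \eqref{mockid1} is delicate in a different way: because it is an \emph{exact} equality with no $M$-term, one must check that the theta quotients arising both from the change of base and from the Hickerson-Mortenson conversion of $f_{a,b,c}$ cancel on the nose, which will come down to a careful application of the Jacobi triple product and the basic identities among the $j(x,q)$.
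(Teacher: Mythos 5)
Your proposal matches the paper's own proof in both strategy and substance: the authors specialize the free parameter $z$ in \eqref{casei}, \eqref{caseii}, and \eqref{caseiv} (to $q^3$, $q^8$, $q^{12}$, and $-1$ respectively) so that the $m(x,q,z)$ appearing there coincides exactly with the Appell-Lerch sum in the Hickerson--Mortenson representation of $\nu$, $\phi$, $\mathcal{F}_1$, or $\mu$ (equations (4.9), (4.43), (4.33), and (4.3) of \cite{Hi-Mo1}), after which the leftover $\xi$- and $\theta_{n,p}$-terms are read off as the weakly holomorphic modular forms $M_i$. The only minor divergence is in \eqref{mockid1}, where the paper establishes the required exact cancellation of theta quotients by a finite computation with Garvan's Maple package rather than by a hand manipulation of $j(x,q)$ identities, and in \eqref{mockid5}, where no further tracing through the change-of-base iterations is needed since \eqref{caseiv} is already uniform in $k$ and compares directly with the rescaled classical formula.
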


\section{Indefinite theta series and Appell-Lerch series}
We recall some facts from \cite{Hi-Mo1}.   The most important of these is a result which allows us to convert from indefinite theta series \eqref{fdef} to Appell-Lerch series. Define

\begin{equation} \label{g}
\begin{aligned}
g_{a,b,c}(x, y, q, z_1, z_0) & := \sum_{t=0}^{a-1} (-y)^t q^{c\binom{t}{2}} j(q^{bt} x, q^a) m\left(-q^{a \binom{b+1}{2} - c \binom{a+1}{2} - t(b^2 - ac)} \frac{(-y)^a}{(-x)^b}, q^{a(b^2 - ac)}, z_0 \right)\\
& + \sum_{t=0}^{c-1} (-x)^t q^{a \binom{t}{2}} j(q^{bt} y, q^c) m\left(-q^{c\binom{b+1}{2} - a\binom{c+1}{2} - t(b^2 -ac)}  \frac{(-x)^c}{(-y)^b}, q^{c(b^2 - ac)}, z_1\right)
\end{aligned}
\end{equation}

\noindent and

\begin{equation*}
\begin{aligned}
& \theta_{n,p}(x,y,q) := \frac{1}{\overline{J}_{0, np(2n+p)}} \sum_{r^{*} = 0}^{p-1} \sum_{s^{*}=0}^{p-1} q^{n\binom{r-(n-1)/2}{2} + (n+p)(r - (n-1)/2)(s+ (n+1)/2) + n \binom{s + (n+1)/2}{2}} \\
& \times \frac{(-x)^{r - (n-1)/2} (-y)^{s + (n+1)/2} J_{p^{2} (2n+p)}^{3} j(-q^{np(s-r)} x^{n} / y^{n}, q^{np^2}) j(q^{p(2n+p)(r+s) + p(n+p)} x^{p} y^{p}, q^{p^2 (2n + p)})}{j(q^{p(2n+p)r + p(n+p)/2} (-y)^{n+p} / (-x)^{n}, q^{p^{2} (2n+p)}) j(q^{p(2n+p)s + p(n+p)/2} (-x)^{n+p} / (-y)^{n}, q^{p^{2} (2n+p)})},
\end{aligned}
\end{equation*}

\noindent where $r := r^{*} + \{(n-1)/2 \}$ and $s:= s^{*} + \{ (n-1)/2 \}$ with $0 \leq \{ \alpha \} < 1$ denoting the fractional part of $\alpha$.  Also, $J_{m}:= J_{m, 3m}$ with $J_{a,m} := j(q^{a}, q^{m})$, and $\overline{J}_{a,m}:=j(-q^{a}, q^{m})$.

Following \cite{Hi-Mo1}, we use the term ``generic" to mean that the parameters do not cause poles in the Appell-Lerch sums or in the quotients of theta functions.

\begin{thm}{\cite[Theorem 0.3]{Hi-Mo1}} \label{hm} Let $n$ and $p$ be positive integers with $(n$, $p)=1$. For generic $x$, $y \in \mathbb{C}^{*}$

$$
f_{n, n+p, n}(x, y, q) = g_{n, n+p, n}(x, y, q, -1, -1) + \theta_{n,p}(x,y,q).
$$

\end{thm}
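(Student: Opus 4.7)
The plan is to convert the indefinite theta series $f_{n, n+p, n}(x, y, q)$ defined in \eqref{fdef} to Appell--Lerch form by splitting the lattice sum along residue classes modulo $n$ and identifying the inner sums with the $m$-sums appearing in the definition of $g$ in \eqref{g}. The symmetry of the quadratic form $n r^2 + 2(n+p) r s + n s^2$ under $r \leftrightarrow s$ is what forces the two structurally identical sums in $g_{n, n+p, n}$, while its discriminant $(n+p)^2 - n^2 = p(2n+p)$ will determine the modulus $n(b^2 - ac) = n p (2n+p)$ of the resulting Appell--Lerch sums.

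First I would split the outer variable as $r = n t + r_0$ with $0 \le r_0 < n$ and regroup. For each fixed $r_0$, the double sum over $t$ and $s$, constrained to the cone $\mathrm{sg}(r) = \mathrm{sg}(s)$, can be repackaged, after applying a geometric series summation to the variable that now runs over all of $\mathbb{Z}$, into an Appell--Lerch sum $m(\cdot, q^{n p (2n+p)}, z_0)$. The accumulated $r_0$-dependent prefactor should work out to be $(-y)^{r_0} q^{n \binom{r_0}{2}} j(q^{(n+p) r_0} x, q^n)$, reproducing the first sum in \eqref{g}. The symmetric splitting $s = n t + s_0$ yields the second sum with the roles of $x$ and $y$ exchanged.

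Each individual splitting already expresses $f_{n, n+p, n}(x, y, q)$ as a single sum of Appell--Lerch terms, with the choice of auxiliary parameter $z$ reflecting a choice of contour or normalization. Writing $g$ as the \emph{sum} of both splittings therefore doubles the count, so the identity $f_{n, n+p, n} = g_{n, n+p, n}(\cdot, -1, -1) + \theta_{n, p}$ forces $\theta_{n, p}$ to play the role of a corrective modular piece. Concretely, the correction arises from Zwegers' shift identity $m(x, q, z) - m(x, q, z') = $ (theta quotient), applied to compare the two Appell--Lerch representations at the common value $z_0 = z_1 = -1$. A Jacobi triple product manipulation then collapses the resulting double sum of theta quotients into the single expression $\theta_{n, p}(x, y, q)$, with the normalization $1/\overline{J}_{0, n p (2n+p)}$ and cubic factor $J_{p^2 (2n + p)}^3$ emerging from that reduction.

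The main obstacle is the bookkeeping. One must track quadratic exponents across all residue pairs $(r_0, s_0) \in \{0, \dots, n-1\}^2$, absorb the fractional-part shifts $\{(n-1)/2\}$ that appear after centering the sums around the origin, and verify that the resulting double theta sum matches the stated form of $\theta_{n, p}(x, y, q)$ exactly. The coprimality $\gcd(n, p) = 1$ enters at this point to ensure that a reduction modulo $p^2(2n + p)$ is well-defined and that no denominator vanishes under the genericity hypothesis. None of the steps is conceptually difficult, but the combinatorics of index shifts and the need to recognize theta quotients via repeated Jacobi triple product applications form the delicate technical heart of the argument.
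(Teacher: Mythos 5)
First, a point of comparison: the paper does not prove this statement at all --- it is quoted verbatim as Theorem 0.3 of Hickerson and Mortenson \cite{Hi-Mo1} --- so your argument has to stand entirely on its own, and it does not. The central step, that after writing $r = nt + r_0$ the cone-constrained double sum ``can be repackaged, after applying a geometric series summation to the variable that now runs over all of $\mathbb{Z}$, into an Appell--Lerch sum,'' is exactly where the difficulty of indefinite theta series lives, and it fails as stated. The constraint $\mathrm{sg}(r)=\mathrm{sg}(s)$ couples the two variables, so neither one runs over all of $\mathbb{Z}$; and even after the standard rearrangement of the cone into $\{r\ge 0,\, s\in\mathbb{Z}\}$ minus $\{s<0,\, r\in\mathbb{Z}\}$, the quadratic form $n\binom{r}{2}+(n+p)rs+n\binom{s}{2}$ has no integral null direction (its null lines have irrational slope), so there is no lattice direction along which the summand is a geometric progression. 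The denominators $1-q^{r-1}xz$ in $m(x,q,z)$ are not produced by a naive geometric series over a sublattice; producing them is the substance of the theorem.

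Second, your structural account of the two sums in $g_{n,n+p,n}$ and of the correction $\theta_{n,p}$ is wrong. The two families of Appell--Lerch sums in \eqref{g} do not arise from ``doubling'' two equivalent representations of $f$: already in the prototype one has $f_{1,2,1}(x,y,q)=j(y,q)\,m(q^2x/y^2,q^3,y/x)+j(x,q)\,m(q^2y/x^2,q^3,x/y)$, with \emph{both} terms genuinely present and no theta correction for that choice of $z_0,z_1$; the two sums reflect the two half-plane pieces of the cone decomposition, one attached to each variable. Consequently $\theta_{n,p}$ cannot be explained as a $z$-shift discrepancy between two already-complete representations. The actual proof in \cite{Hi-Mo1} runs along different lines: one shows that $f_{a,b,c}-g_{a,b,c}$ satisfies the same elliptic transformation laws in $x$ and $y$ as a theta function of the appropriate index (using functional equations of the type \eqref{fprop1}--\eqref{fprop2} for $f$ and the known quasi-periodicity of $m$), that its apparent poles cancel so it is holomorphic in $x$ and $y$, and hence that it lies in an explicitly describable finite-dimensional space of theta functions, which is then pinned down; the hypothesis $(n,p)=1$ and the specialization $z_0=z_1=-1$ enter in that final identification. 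If you intend to prove the theorem rather than cite it, that functional-equation argument is the viable route.
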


We shall also require certain facts about $j(x,q)$, $m(x,q,z)$, and $f_{a,b,c}(x,y,q)$, which we collect here.   First,
from the definition of $j(x,q)$, we have

\begin{equation} \label{j1}
j(q^{n} x, q) = (-1)^{n} q^{-\binom{n}{2}} x^{-n} j(x,q)
\end{equation}

\noindent where $n \in \mathbb{Z}$ and

\begin{equation} \label{j2}
j(x,q) = j(q/x, q) = -x j(x^{-1}, q).
\end{equation}

Next, a relevant property of the sum $m(x, q, z)$ is given in the following (see Corollary 3.11 in \cite{mort}). 

\begin{prop} \label{mprops} For generic $x$, $z \in \mathbb{C}^{*}$

\begin{equation} \label{m3}
m(x, q, z) = m(-qx^2, q^4, -1) - q^{-1} x m(-q^{-1}x^2, q^4, -1) - \xi(x,q,z)
\end{equation}

\noindent where 

$$
\xi(x,q,z):= \frac{J_2^3}{j(xz, q) j(qx^2, q^2) \overline{J}_{0,4}} \Biggl[ \frac{j(qx^{2} z, q^2) j(-z^2, q^4)}{j(z,q^2)} - xz \frac{j(q^{2} x^{2} z, q^2) j(-q^{2} x^{2}, q^4)}{j(qz, q^2)} \Biggr].
$$

\end{prop}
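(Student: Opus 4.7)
The plan is to prove \eqref{m3} by dissecting the series defining $m(x,q,z)$ modulo four in the summation index, reassembling the four subseries as two Appell--Lerch sums at base $q^{4}$, and identifying the residual theta-function error as $\xi(x,q,z)$. Starting from
\[
m(x,q,z) = \frac{1}{j(z,q)} \sum_{r \in \mathbb{Z}} \frac{(-1)^{r} q^{\binom{r}{2}} z^{r}}{1-q^{r-1}xz},
\]
I would substitute $r=4s+\epsilon$ for $\epsilon\in\{0,1,2,3\}$ and pull the $\epsilon$-dependent powers of $q$ and $z$ out of each resulting $s$-sum. After reindexing, the $s$-sums acquire the canonical shape $\sum_{s}(-1)^{s} Q^{\binom{s}{2}} Z^{s}/(1-Q^{s-1}XZ)$ with $Q=q^{4}$, so up to explicit theta-function prefactors they become scalar multiples of $m(-qx^{2},q^{4},-1)$ and $m(-q^{-1}x^{2},q^{4},-1)$, exactly the terms appearing on the right of \eqref{m3}.

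It is technically cleaner to carry out this dissection in two stages: first split $r$ by parity to obtain a ``doubling'' expression of $m(x,q,z)$ in terms of two base-$q^{2}$ Appell--Lerch sums plus a theta-function correction, then apply the same doubling in base $q^{2}$ to each of those to descend to base $q^{4}$. The doubling step rests on the standard dissection $j(z,q)=j(-qz^{2},q^{4})-z\,j(-q^{3}z^{2},q^{4})$ and its companions, which follow from the Jacobi triple product together with \eqref{j1} and \eqref{j2}. Along the way one uses the ``change of $z$'' property of $m$, namely that $m(x,q,z_{1})-m(x,q,z_{0})$ is an explicit quotient of theta functions, to move from the $z$-normalizations naturally produced by the dissection to the fixed third argument $-1$ appearing on the right of \eqref{m3}.

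The main obstacle is bookkeeping. Matching the precise form of $\xi(x,q,z)$ stated in the proposition requires tracking signs, powers of $q$ and $x$ from the reindexing, and theta-function prefactors from the two successive $z$-changes, then simplifying the composite correction via \eqref{j1}, \eqref{j2}, and the triple product. The expected outcome is the bracketed difference of the quotients $j(qx^{2}z,q^{2})j(-z^{2},q^{4})/j(z,q^{2})$ and $xz\,j(q^{2}x^{2}z,q^{2})j(-q^{2}x^{2},q^{4})/j(qz,q^{2})$, with the overall normalization $J_{2}^{3}/(j(xz,q)\,j(qx^{2},q^{2})\,\overline{J}_{0,4})$ arising from collecting the prefactors produced during the dissection. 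A useful sanity check is to verify $z$-independence of both sides modulo the explicit theta-function discrepancy, which immediately forces the correction to have the quoted denominator structure in $z$ and narrows the bookkeeping to a finite comparison of theta-function constants.
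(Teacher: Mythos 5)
First, a point of comparison: the paper does not prove this proposition at all — it is quoted verbatim from Mortenson's paper (Corollary 3.11 of \cite{mort}), so there is no internal proof to match your argument against. Your sketch is in the right general spirit of how such identities are established in \cite{mort} and \cite{Hi-Mo1} (dissect the Appell--Lerch sum, reassemble at a higher base, and use the ``change of $z$'' theorem to normalize the third argument to $-1$, collecting the discrepancy as a theta quotient). However, as written the roadmap is miscalibrated in a way that would derail the computation.

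The genuine gap is in the dissection step. Writing $r = ns + \epsilon$ in $m(x,q,z)$ sends the base $q$ to $q^{n^2}$, not to $q^n$: the quadratic exponent satisfies $\binom{ns+\epsilon}{2} = n^2\binom{s}{2} + (\text{linear in } s)$. So your proposed substitution $r = 4s+\epsilon$ produces base-$q^{16}$ sums, and your two-stage plan ($q \to q^2 \to q^4$ via successive parity splits) passes through a base-$q^2$ stage that does not exist — a single parity split already lands at $q^4$ (and two of them would land at $q^{16}$). The identity \eqref{m3} is the $n=2$ dissection: the coefficients $q^{-\binom{r+1}{2}}(-x)^r$ for $r=0,1$ give exactly $1$ and $-q^{-1}x$, and the arguments $-q^{1-2r}x^2$ give $-qx^2$ and $-q^{-1}x^2$. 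Moreover, a parity split of the index alone does not produce Appell--Lerch sums at any base, because the denominators $1-q^{2s+\epsilon-1}xz$ carry $q^{2s}$ while the numerators carry $q^{4\binom{s}{2}}$; you need the additional symmetrization $\frac{1}{1-a} = \frac{1}{1-a^2} + \frac{a}{1-a^2}$ applied to each denominator (or an equivalent device, e.g.\ an argument via elliptic-function theory comparing poles, residues, and quasi-periodicity) before the four resulting subseries take the canonical form $\sum_s (-1)^s Q^{\binom{s}{2}}Z^s/(1-Q^{s-1}XZ)$ with $Q = q^4$. Your identification of the auxiliary ingredients — the triple-product dissection $j(z,q) = j(-qz^2,q^4) - z\,j(-q^3z^2,q^4)$ and the change-of-$z$ formula for $m$ — is correct and these are indeed what produce $\xi(x,q,z)$, but the core indexing of the dissection needs to be fixed before the bookkeeping you describe can be carried out.
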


Finally, two important transformation properties of $f_{a,b,c}(x,y,q)$ are given in the following (see Propositions 5.1 and 5.2 in \cite{Hi-Mo1}).

\begin{prop} For $x$, $y \in \mathbb{C}^{*}$,

\begin{equation} \label{fprop1}
\begin{aligned}
f_{a,b,c}(x,y,q) = & f_{a,b,c}(-x^2 q^a, -y^2 q^c, q^4) - x f_{a,b,c}(-x^2 q^{3a}, -y^2 q^{c+2b}, q^4) \\
& -y f_{a,b,c}(-x^2 q^{a+2b}, -y^2 q^{3c}, q^4) + xyq^b f_{a,b,c}(-x^2 q^{3a+2b}, -y^2 q^{3c+2b}, q^4)
\end{aligned}
\end{equation}

\noindent and 

\begin{equation} \label{fprop2}
f_{a,b,c}(x, y, q)= - \frac{q^{a+b+c}}{xy} f_{a,b,c}(q^{2a+b}/ x, q^{2c+b} / y, q).
\end{equation}

\end{prop}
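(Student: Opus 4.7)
Both identities follow directly from the definition \eqref{fdef} of $f_{a,b,c}$ by elementary index manipulations in the double sum, in the same spirit as the classical transformations of indefinite theta series in Zwegers's thesis. The result is already cited from Hickerson--Mortenson, so the plan is to reproduce the short combinatorial arguments.

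For \eqref{fprop1}, I would split the double sum according to the parities of $r$ and $s$. Writing $r = 2R+\epsilon$ and $s = 2S+\delta$ with $\epsilon, \delta \in \{0,1\}$, the condition $\text{sg}(r)=\text{sg}(s)$ reduces to $\text{sg}(R)=\text{sg}(S)$ for each of the four choices of $(\epsilon,\delta)$, since for $R \in \mathbb{Z}$ the sign of $2R+\epsilon$ is determined by whether $R \geq 0$ or $R \leq -1$. Using the identities $\binom{2R}{2} = 4\binom{R}{2}+R$, $\binom{2R+1}{2} = 4\binom{R}{2}+3R$, and $(2R+\epsilon)(2S+\delta) = 4RS+2\delta R+2\epsilon S+\epsilon\delta$, the exponent $a\binom{r}{2}+brs+c\binom{s}{2}$ rewrites as $4a\binom{R}{2}+4bRS+4c\binom{S}{2}$ (effecting the base change $q \mapsto q^4$) plus a linear-in-$(R,S)$ piece whose coefficients depend on $(\epsilon,\delta)$. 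These linear pieces get absorbed into exponents of the arguments $-x^{2}q^{\bullet}$ and $-y^{2}q^{\bullet}$ of the inner $f_{a,b,c}(\,\cdot\,,\,\cdot\,,q^4)$, producing the four inner summands on the RHS; meanwhile the factors $(-1)^{r+s} = (-1)^{\epsilon+\delta}$, $x^{\epsilon} y^{\delta}$, and $q^{b\epsilon\delta}$ combine into the four outer coefficients $1$, $-x$, $-y$, $xyq^{b}$.

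For \eqref{fprop2}, the plan is to perform the substitution $r \mapsto -r-1$, $s \mapsto -s-1$, which is a bijection swapping the subsum over $r,s \geq 0$ with the subsum over $r,s \leq -1$. Using
\[
\binom{-r-1}{2} = \binom{r}{2}+2r+1, \qquad (-r-1)(-s-1) = rs+r+s+1,
\]
the quadratic part of the exponent is preserved but acquires the extra linear and constant contribution $(2a+b)r+(b+2c)s+(a+b+c)$; simultaneously $x^{-r-1}y^{-s-1} = (xy)^{-1}(1/x)^{r}(1/y)^{s}$, the sign $(-1)^{r+s}$ is invariant, and $\text{sg}(-r-1) = -\text{sg}(r)$ under the convention $\text{sg}(0)=1$, so the leading $\text{sg}$ factor contributes a global minus while the constraint $\text{sg}(r)=\text{sg}(s)$ is preserved. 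Collecting the pieces yields the prefactor $-q^{a+b+c}/(xy)$ and the transformed arguments $q^{2a+b}/x$, $q^{2c+b}/y$ exactly as stated.

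The main obstacle is purely bookkeeping. In \eqref{fprop1} the care is in tracking how the $(\epsilon,\delta)$-dependent linear and constant contributions distribute between the outer coefficients and the inner arguments of $f_{a,b,c}$ so that the four parity cases collapse to the symmetric-looking RHS. In \eqref{fprop2} one must confirm that the sign convention $\text{sg}(0)=1$ produces a genuine bijection of the two half-sums with no boundary anomaly at $r=0$ or $s=0$. No analytic input beyond absolute convergence of the indefinite series $f_{a,b,c}(x,y,q)$ is needed.
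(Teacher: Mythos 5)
Your proposal is correct, and it coincides with the standard argument: the paper itself gives no proof, citing Propositions 5.1 and 5.2 of Hickerson--Mortenson, and those are proved exactly by the parity splitting $r=2R+\epsilon$, $s=2S+\delta$ for \eqref{fprop1} and the involution $r\mapsto -r-1$, $s\mapsto -s-1$ for \eqref{fprop2} that you describe. All the bookkeeping you flag (the identities for $\binom{2R+\epsilon}{2}$ and $\binom{-r-1}{2}$, the preservation of the constraint $\text{sg}(r)=\text{sg}(s)$, and the sign flip $\text{sg}(-r-1)=-\text{sg}(r)$) checks out.
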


\section{The Bailey chain and mixed mock modular forms}
All known $q$-hypergeometric mock theta functions are expressible via the Bailey lemma in terms of Appell-Lerch series and/or indefinite theta functions.   Iterating the relevant Bailey pairs using \eqref{alphaprimedef} and \eqref{betaprimedef} provides a virtually endless source of mixed mock modular forms.   

To illustrate what happens in the case of Appell-Lerch series, let us return to the example from the introduction.  Let $\mathcal{B}_{1}^{(k)}(q)$ denote the multisum appearing in \eqref{generalizedfofq}.  

Using the identities (for generic $x$ and $q$)

$$
\frac{1}{1-x} = \frac{1+x + \cdots + x^{2k}}{1-x^{2k+1}}
$$

\noindent and

$$
\sum_{n \in \mathbb{Z}} \frac{(-1)^n q^{\binom{n+1}{2}}}{1-xq^n} = \frac{(q)_{\infty}^2}{(x)_{\infty} (q/x)_{\infty}},
$$

\noindent equation \eqref{generalizedfofq} gives

\begin{equation*}
\mathcal{B}_{1}^{(k)}(q) = \frac{2}{(q)_{\infty}} \Biggl(  \sum_{\substack{i=1 \\ i \neq k+1}}^{2k+1} (-1)^{i+1} j(q^{k+i}, q^{2k+1}) m(-q^{k-i+1}, q^{2k+1}, q^{k+i}) + (-1)^k\frac{(q^{2k+1}; q^{2k+1})_{\infty}^2}{2(-q^{2k+1}; q^{2k+1})_{\infty}^2} \Biggr).
\end{equation*}
Since the specialized Appell-Lerch series $m(x,q,z)$ is generically a mock theta function, $\mathcal{B}_{1}^{(k)}(q)$ is in general a mixed mock modular form.   When $k=1$ we have $j(q^2,q^3) = -qj(q^4,q^3) = (q)_{\infty}$ and so one ``accidentally" obtains a genuine mock theta function.

The case of $\mathcal{B}_{1}^{(k)}(q)$ is typical.  Iteration along the Bailey chain produces series of the form $\frac{1}{f}A_{\ell}(a,b,q)$, where $f$ is a modular form and $A_{\ell}(a,b,q)$ is the ``level $\ell$" Appell sum \cite{Zw2},
\begin{equation}
A_{\ell}(a,b,q) := a^{\ell/2}\sum_{n \in \mathbb{Z}} \frac{(-1)^{\ell n}q^{\ell n(n+1)/2}b^n}{1-aq^n}.
\end{equation} 
It is known that the $A_{\ell}(a,b,q)$ are generically mixed mock modular forms \cite{Zw2}.

To illustrate what happens in the case of indefinite theta functions, consider \eqref{phi}, which satisfies \cite{Ch1}
\begin{equation*}
\begin{aligned}
\phi(q) & = \frac{(-q)_{\infty}}{(q)_{\infty}}\Bigl( \sum_{r, s \geq 0} - \sum_{r, s < 0} \Bigr) (-1)^{r+s}q^{r^2 + r + 3rs + s^2 + s} \\
& =  \frac{(-q)_{\infty}}{(q)_{\infty}} f_{2,3,2}(q^2,q^2,q). 
\end{aligned}
\end{equation*}  
Using the relevant Bailey pair (see \cite{Ch1}), iterating along the Bailey chain with $b,c \to \infty$ at each step, and then substituting into \eqref{limitBailey} with $b =-q$ and $c \to \infty$, we obtain

\begin{equation*}
\begin{aligned}
\mathcal{B}_{2}^{(k)}(q) & := \sum_{n_k \geq n_{k-1} \geq \cdots \geq n_1 \geq  0} \frac{(-q)_{n_k} q^{\binom{n_k + 1}{2} + n_{k-1}^2 + n_{k-1} + \cdots + n_1^2 + n_1}}{(q)_{n_k - n_{k-1}} \cdots (q)_{n_2 - n_1} (q^{n_1 + 1})_{n_1 + 1}} \\
& = \frac{(-q)_{\infty}}{(q)_{\infty}} \Bigl( \sum_{r, s \geq 0} - \sum_{r, s < 0} \Bigr) (-1)^{r+s}q^{kr^2 + kr + (2k+1)rs + ks^2 + ks} \\
& =  \frac{(-q)_{\infty}}{(q)_{\infty}} f_{2k, 2k+1, 2k}(q^{2k}, q^{2k}, q). 
\end{aligned}
\end{equation*}
The fact that we have a genuine mock theta function for $k=1$ is an accident.  Theorem \ref{hm} clearly shows that these are, in general, mixed mock theta functions.   This is typical for indefinite theta functions.

\section{Proof of Theorem \ref{main} }
We begin by establishing our key Bailey pair.

\begin{prop} \label{keyprop}
The sequences $\alpha_{n_k}$ and $\beta_{n_k}$ form a Bailey pair relative to $q$, where

\begin{equation*} \label{keyalpha}
\alpha_{n_k} = \frac{q^{((2^{k-1}+1)n_k^2+(2^{k-1}-1)n_k)/2}(1-q^{2n_k+1})}{(1-q)}\sum_{|j| \leq n_k} (-1)^jq^{-2^{k-2}j^2}
\end{equation*}

\noindent and

\begin{equation*} \label{keybeta}
\beta_{n_k} = \frac{1}{(-q)_{n_k}}\sum_{n_k \geq n_{k-1} \cdots \geq n_1 \geq 0} B_k(n_k,n_{k-1},\dots,n_1;q).
\end{equation*}
\end{prop}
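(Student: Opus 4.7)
The plan is to iterate the BIS change of base (the lemma above) on an auxiliary Bailey pair and then apply a single specialization of the Bailey lemma \eqref{alphaprimedef}--\eqref{betaprimedef}. The key observation is that the proposition's $\alpha_{n_k}$ carries an overall factor $q^{\binom{n_k+1}{2}}$ and that $\beta_{n_k}$ carries the outer factor $\frac{1}{(-q)_{n_k}}$; together, these are precisely the footprint of \eqref{alphaprimedef}--\eqref{betaprimedef} applied with $a=q$, $b=-q$, $c\to\infty$ (at which $(b)_n/(aq/b)_n=1$ and $\lim_{c\to\infty}(c)_n(aq/bc)^n/(aq/c)_n=q^{\binom{n+1}{2}}$). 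I therefore introduce the stripped auxiliary sequence
$$
\tilde\alpha^{(k)}_n:=\frac{q^{2^{k-2}n^2+(2^{k-2}-1)n}(1-q^{2n+1})}{1-q}\sum_{|j|\le n}(-1)^jq^{-2^{k-2}j^2},
$$
so that $q^{\binom{n+1}{2}}\tilde\alpha^{(k)}_n=\alpha_{n}$, together with the unique $\tilde\beta^{(k)}_n$ making $(\tilde\alpha^{(k)},\tilde\beta^{(k)})$ a Bailey pair relative to $q$. A short computation confirms $\tilde\alpha^{(k+1)}_n=\tfrac{(1+q)q^n}{1+q^{2n+1}}\tilde\alpha^{(k)}_n(q^2)$, which is exactly the $\alpha$-rule \eqref{alphabase}.

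I then induct on $k\ge 2$. For the base case $k=2$, the pair
$$
\tilde\alpha^{(2)}_n=\frac{q^{n^2}(1-q^{2n+1})}{1-q}\sum_{|j|\le n}(-1)^jq^{-j^2},\qquad \tilde\beta^{(2)}_n=\frac{(-1)^n}{(q^2;q^2)_n}
$$
can be verified directly from \eqref{pairdef}, using as an intermediate ingredient the Bailey pair $\bigl((-1)^kq^{k^2}(1-q^{2k+1})/(1-q),\,1/(q^2;q^2)_n\bigr)$ obtained from a single application of the BIS change of base to the unit Bailey pair \eqref{unit1}--\eqref{unit2} relative to $q$.

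For the inductive step I apply the BIS change of base to $(\tilde\alpha^{(k)},\tilde\beta^{(k)})$. The $\alpha$-side is handled by the identity displayed above. For the $\beta$-side I establish the single-level recursion
$$
\frac{\tilde B_{k+1}(n_k,n_{k-1},\dots,n_1;q)}{\tilde B_k(n_{k-1},\dots,n_1;q^2)}=\frac{(-q)_{2n_{k-1}}\,q^{n_{k-1}}}{(q^2;q^2)_{n_k-n_{k-1}}},
$$
which is precisely the kernel of \eqref{betabase}. The check amounts to a level-by-level matching of $q$-exponents and of the doubling Pochhammer bases $(q;q),(q^2;q^2),\dots,(q^{2^{k-1}};q^{2^{k-1}})$ that appear in the summand $B_k$.

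Finally, I apply \eqref{alphaprimedef}--\eqref{betaprimedef} to $(\tilde\alpha^{(k)},\tilde\beta^{(k)})$ with $a=q$, $b=-q$, $c\to\infty$. The limits recorded above give $\alpha_{n_k}=q^{\binom{n_k+1}{2}}\tilde\alpha^{(k)}_{n_k}$, while
$$
\beta_{n_k}=\frac{1}{(-q)_{n_k}}\sum_{n_{k-1}=0}^{n_k}\frac{(-q)_{n_{k-1}}q^{\binom{n_{k-1}+1}{2}}}{(q)_{n_k-n_{k-1}}}\tilde\beta^{(k)}_{n_{k-1}},
$$
and expanding $\tilde\beta^{(k)}_{n_{k-1}}$ reconstitutes precisely the outermost layer $\frac{(-q)_{n_{k-1}}q^{\binom{n_{k-1}+1}{2}}}{(q)_{n_k-n_{k-1}}}$ of $B_k$ that is absent from $\tilde B_k$. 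I expect the main obstacle to be the careful Pochhammer bookkeeping in the $\beta$-induction of the third step; the base case, the $\alpha$-induction, and the final Bailey-lemma specialization are all routine.
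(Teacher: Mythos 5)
Your overall architecture is exactly the paper's: the authors likewise strip off the final Bailey-chain step, iterate the Bressoud--Ismail--Stanton change of base \eqref{alphabase}--\eqref{betabase} on a seed Bailey pair relative to $q$, and then apply \eqref{alphaprimedef}--\eqref{betaprimedef} with $a=q$, $b=-q$, $c\to\infty$. Your computation of the limiting factors $q^{\binom{n+1}{2}}$ and $(-q)_{n_{k-1}}/(-q)_{n_k}(q)_{n_k-n_{k-1}}$, your $\alpha$-recursion $\tilde\alpha^{(k+1)}_n=\tfrac{(1+q)q^n}{1+q^{2n+1}}\tilde\alpha^{(k)}_n(q^2)$, the $\beta$-kernel matching, and the reassembly of the outermost layer of $B_k$ all agree with the paper's proof.

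The one genuine gap is your base case. The seed pair
\begin{equation*}
\alpha_n=\frac{q^{n^2}(1-q^{2n+1})}{1-q}\sum_{|j|\le n}(-1)^jq^{-j^2},\qquad \beta_n=\frac{(-1)^n}{(q^2;q^2)_n}
\end{equation*}
is not verified from scratch in the paper; it is quoted as the case $(a,b,c)\to(q,-1,0)$ of Theorem 2.2 of Andrews--Hickerson. Your proposed route --- verify it directly from \eqref{pairdef} using, as an ``intermediate ingredient,'' the pair obtained by one application of \eqref{alphabase}--\eqref{betabase} to the unit pair \eqref{unit1}--\eqref{unit2} --- does not go through as stated. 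That intermediate pair is $\bigl((-1)^nq^{n^2}(1-q^{2n+1})/(1-q),\,1/(q^2;q^2)_n\bigr)$, a genuinely different pair: the sign $(-1)^n$ sits on the $\alpha$-side and there is no indefinite sum over $j$. If you substitute the seed $\alpha_n$ into \eqref{pairdef} and interchange the $j$- and $k$-sums, the inner sum runs over $k\ge|j|$ rather than $k\ge 0$ and carries no $(-1)^k$, so the known evaluation coming from the intermediate pair is not applicable. What must be proved is the nontrivial identity
\begin{equation*}
\frac{(-1)^n}{(q^2;q^2)_n}=\sum_{k=0}^{n}\frac{q^{k^2}(1-q^{2k+1})}{(1-q)\,(q)_{n-k}(q^2;q)_{n+k}}\sum_{|j|\le k}(-1)^jq^{-j^2}
\end{equation*}
for every $n$, and this requires either an honest proof (e.g.\ the constant-term/Bailey-pair technology of Andrews and Hickerson) or a citation; everything else in your proposal is routine and matches the paper.
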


\begin{proof}

Consider the Bailey pair relative to $q$,

\begin{equation*}
\alpha_n = \frac{q^{n^2}(1-q^{2n+1})}{(1-q)}\sum_{|j| \leq n} (-1)^jq^{-j^2}
\end{equation*}
and
\begin{equation*}
\beta_n = \frac{(-1)^n}{(q^2;q^2)_n}.
\end{equation*}
This may be read off from the case $(a,b,c) \to (q,-1,0)$ of Theorem 2.2 of \cite{An-Hi1}. Iterating using \eqref{alphabase} and \eqref{betabase} gives two sequences,

\begin{equation*}
\begin{aligned}
\alpha_n &= \frac{q^{n^2}(1-q^{2n+1})}{(1-q)}\sum_{|j| \leq n} (-1)^jq^{-j^2}, \\
\alpha'_n &= \frac{q^{2n^2+n}(1-q^{2n+1})}{(1-q)}\sum_{|j| \leq n} (-1)^jq^{-2j^2}, \\
\alpha''_n &= \frac{q^{4n^2+3n}(1-q^{2n+1})}{(1-q)}\sum_{|j| \leq n} (-1)^jq^{-4j^2}, \\
\alpha'''_n &= \frac{q^{8n^2+7n}(1-q^{2n+1})}{(1-q)}\sum_{|j| \leq n} (-1)^jq^{-8j^2}, \\
&\vdots
\end{aligned}
\end{equation*}
and
\begin{equation*}
\begin{aligned}
\beta_n &= \frac{(-1)^n}{(q^2;q^2)_n}, \\
\beta'_n &= \sum_{n \geq n_1 \geq 0} \frac{(-q)_{2n_1} q^{n_1} (-1)^{n_1}}{(q^2;q^2)_{n-n_1}(q^4;q^4)_{n_1}}, \\
\beta''_n &= \sum_{n \geq n_2 \geq n_1 \geq 0} \frac{(-q)_{2n_2}(-q^2;q^2)_{2n_1}q^{n_2+2n_1}(-1)^{n_1}}{(q^2;q^2)_{n-n_2}(q^4;q^4)_{n_2-n_1}(q^8;q^8)_{n_1}}, \\
\beta'''_n &= \sum_{n \geq n_3 \geq n_2 \geq n_1 \geq 0} \frac{(-q)_{2n_3}(-q^2;q^2)_{2n_2}(-q^4;q^4)_{2n_1}q^{n_3+2n_2+4n_1}(-1)^{n_1}}{(q^2;q^2)_{n-n_3}(q^4;q^4)_{n_3-n_2}(q^8;q^8)_{n_2-n_1}(q^{16};q^{16})_{n_1}}, \\
&\vdots
\end{aligned}
\end{equation*}

\noindent The general terms are 

\begin{equation*}
\alpha^{(k)}_n = \frac{q^{2^kn^2+(2^k-1)n}(1-q^{2n+1})}{(1-q)}\sum_{|j| \leq n} (-1)^jq^{-2^kj^2}
\end{equation*} 
and
\begin{equation*}
\beta^{(k)}_n = \sum_{n \geq n_{k} \geq n_{k-1} \cdots \geq n_1 \geq 0} \frac{(-q)_{2n_k}(-q^2;q^2)_{2n_{k-1}}\cdots(-q^{2^{k-1}};q^{2^{k-1}})_{n_1}q^{n_k+2n_{k-1}+\cdots+2^{k-1}n_1}(-1)^{n_1}}{(q^2;q^2)_{n-n_k}(q^4;q^4)_{n_k-n_{k-1}}\cdots (q^{2^{k}};q^{2^{k}})_{n_2-n_1} (q^{2^{k+1}};q^{2^{k+1}})_{n_1}}.
\end{equation*}
We then apply \eqref{alphaprimedef} and \eqref{betaprimedef} with $b=-q$ and $c \to \infty$, shifting $k \to k-2$ and replacing $n$ by $n_k$ to obtain the result.

\end{proof}

\begin{proof}[Proof of Theorem \ref{main}]

For (\ref{main1}), apply Proposition \ref{keyprop} and let $b=-q$ and $c \to \infty$ in \eqref{limitBailey} to obtain

\begin{equation} \label{first}
\begin{aligned}
\mathcal{R}_1^{(k)}(q) &= \frac{(-q)_{\infty}}{(q)_{\infty}} \sum_{n \geq 0 \atop |j| \leq n} (-1)^j q^{(2^{k-2}+1)n^2 +2^{k-2}n - 2^{k-2}j^2}(1-q^{2n+1}) \\
& =  \frac{(-q)_{\infty}}{(q)_{\infty}} \Biggl( \sum_{n \geq 0} q^{(2^{k-2} + 1)n^2 + {2^{k-2}}n} \sum_{j=-n}^{n} (-1)^{j} q^{-2^{k-2} j^2} \\
& \hskip1in - q \sum_{n \geq 0} q^{(2^{k-2} + 1)n^2 + (2^{k-2} + 2)n} \sum_{j=-n}^{n} (-1)^{j} q^{-2^{k-2} j^2} \Biggr).
\end{aligned}
\end{equation}   

After replacing $n$ with $-n-1$ in the second sum of (\ref{first}), we let $n=(r+s)/2$ and $j=(r-s)/2$ to find

\begin{equation*}
\begin{aligned}
\mathcal{R}_1^{(k)}(q) & = \frac{(-q)_{\infty}}{(q)_{\infty}} \Biggl ( \Bigl(  \sum_{\substack{r, s \geq 0 \\ r \equiv s \imod{2} }} - \sum_{\substack{r, s < 0 \\ r \equiv {s \imod{2}} }} \Bigr) (-1)^{\frac{r-s}{2}} q^{\frac{1}{4} r^2 + \frac{1}{2}(2^{k-1} + 1)rs + 2^{k-3}r + \frac{1}{4}s^2 + 2^{k-3} s}   \Biggr) \\
& = \frac{(-q)_{\infty}}{(q)_{\infty}} \Bigl(f_{1,2^{k-1}+1,1}(q^{2^{k-2}+1},q^{2^{k-2}+1},q^2) \\ 
& \hskip1in + q^{2^{k-1}+1}f_{1,2^{k-1}+1,1}(q^{3(2^{k-2}+1)},q^{3(2^{k-2}+1)},q^2) \Bigr) \\
\end{aligned}
\end{equation*}

\noindent where in the last step we let $r \to 2r$ and $s \to 2s$, then let $r \to 2r+1$ and $s \to 2s+1$ and invoke (\ref{fdef}). By Theorem \ref{hm}, (\ref{g}), (\ref{j1}) and (\ref{j2}), we have

\begin{equation} \label{a1}
\begin{aligned}
 f_{1,2^{k-1}+1,1}(q^{2^{k-2}+1},q^{2^{k-2}+1},q^2) 
&= 2 (-1)^{2^{k-3}} q^{-2^{2k-6}} j(q, q^2) m(-q^{2^k (2^{k-3} + 1)}, q^{2^{k+1} (2^{k-2} + 1)}, -1) \\ &+ \theta_{1, 2^{k-1}}(q^{2^{k-2}+1}, q^{2^{k-2}+1}, q^2)
\end{aligned}
\end{equation}

\noindent and

\begin{equation} \label{a2}
\begin{aligned}
f_{1,2^{k-1}+1,1}&(q^{3(2^{k-2}+1)},q^{3(2^{k-2}+1)},q^2) 
\\ &= -2 (-1)^{2^{k-3}} q^{3 \cdot 2^{k-3} (-3 \cdot 2^{k-3} - 2) - 1} j(q, q^2) m(-q^{-2^{2k-3}}, q^{2^{k+1} (2^{k-2} + 1)}, -1) \\ &+ \theta_{1, 2^{k-1}} (q^{3(2^{k-2}+1)}, q^{3(2^{k-2}+1)}, q^2).
\end{aligned}
\end{equation}

Combining (\ref{a1}) and (\ref{a2}) and applying (\ref{m3}) implies that 

\begin{equation} \label{casei}
\begin{aligned}
\mathcal{R}_1^{(k)}(q) &=  2(-1)^{2^{k-3}} q^{-2^{2k-6}} \bigl [ m(q^{2^{k-2}}, q^{2^{k-1} (2^{k-2} + 1)}, z) + \xi(q^{2^{k-2}}, q^{2^{k-1} (2^{k-2} + 1)}, z)  \bigr] \\
& + \frac{\theta_{1, 2^{k-1}}(q^{2^{k-2}+1},q^{2^{k-2}+1},q^2)}{j(q, q^2)} \\ & + q^{2^{k-1} + 1} \frac{\theta_{1, 2^{k-1}}(q^{3(2^{k-2}+1)},q^{3(2^{k-2}+1)},q^2)}{j(q, q^2)}. 
\end{aligned}
\end{equation}

Next, for (\ref{main2}), apply Proposition \ref{keyprop} and let $b,c \to \infty$ in \eqref{limitBailey} to obtain 

\begin{equation} \label{second}
\begin{aligned}
\mathcal{R}_2^{(k)}(q) =& \frac{1}{(q)_{\infty}} \sum_{n \geq 0 \atop |j| \leq n} (-1)^j q^{((2^{k-1}+3)n^2 +(2^{k-1}+1)n)/2 - 2^{k-2}j^2}(1-q^{2n+1}) \\
& = \frac{1}{(q)_{\infty}} \Biggl ( \sum_{n \geq 0} q^{\frac{2^{k-1} + 3}{2} n^2 + \frac{2^{k-1} + 1}{2} n} \sum_{j=-n}^{n} (-1)^j q^{-2^{k-2} j^2}  \\
& \hskip1in - q \sum_{n \geq 0} q^{\frac{2^{k-1} + 3}{2} n^2 + \frac{2^{k-1} + 5}{2} n} \sum_{j=-n}^{n} (-1)^j q^{-2^{k-2} j^2} \Biggr).
\end{aligned}
\end{equation} 

We again replace $n$ with $-n-1$ in the second sum of (\ref{second}), then let $n=(r+s)/2$ and $j=(r-s)/2$ to get

\begin{equation*}
\begin{aligned}
\mathcal{R}_2^{(k)}(q) & =  \frac{1}{(q)_{\infty}}  \Biggl ( \Bigl(  \sum_{\substack{r, s \geq 0 \\ r \equiv s \imod{2} }} - \sum_{\substack{r, s < 0 \\ r \equiv {s \imod{2}} }} \Bigr) (-1)^{\frac{r-s}{2}} q^{\frac{3}{8} r^2 + \frac{2^k + 3}{4} rs + \frac{2^{k-1} + 1}{4} r + \frac{3}{8} s^2 + \frac{2^{k-1} + 1}{4}s}   \Biggr) \\
& = \frac{1}{(q)_{\infty}}\left(f_{3,2^k+3,3}(q^{2^{k-2}+2},q^{2^{k-2}+2},q) + q^{2^{k-1}+2}f_{3,2^k+3,3}(q^{3(2^{k-2}+2)-1},q^{3(2^{k-2}+2)-1},q)\right).
\end{aligned}
\end{equation*}

\noindent Now, for $k$ odd, apply Theorem \ref{hm} twice and simplify using (\ref{g}), (\ref{j1}) and (\ref{j2}) to get

\begin{equation} \label{second1}
\begin{aligned}
f_{3,2^k+3,3}(q^{2^{k-2}+2},q^{2^{k-2}+2},q) & = -2q^{-\frac{2^{k-3}(2^{k-2}+1)}{3}} j(q, q^3) m(-q^{2^{k-1} (5 \cdot 2^{k-1} + 17)}, q^{3 \cdot 2^{k+1} (2^{k-1} + 3)}, -1) \\
& + 2q^{-35\cdot2^{k-3}-27\cdot2^{2k-5}} j(q, q^3) m(-q^{2^{k-1}(-3 \cdot 2^{k-1} - 7)}, q^{3 \cdot 2^{k+1} (2^{k-1} + 3)}, -1) \\ & + \theta_{3, 2^k}(q^{2^{k-2}+2},q^{2^{k-2}+2},q) 
\end{aligned}
\end{equation}

\noindent and

\begin{equation} \label{second2}
\begin{aligned}
 q^{2^{k-1} + 2} f_{3,2^k+3,3}&(q^{3 \cdot 2^{k-2} + 5}, q^{3 \cdot 2^{k-2} + 5}, q) \\
& = -2q^{-3\cdot 2^{k-3} (2^{k-2} + 1)} j(q, q^3) m(-q^{2^{k-1}(3 \cdot 2^{k-1} + 11)}, q^{3 \cdot 2^{k+1} (2^{k-1} + 3)}, -1) \\
& + 2 q^{\frac{-61 \cdot 2^{k-3} - 49 \cdot 2^{2k-5}}{3}} j(q, q^3)  m(-q^{-2^{k-1}(2^{k-1} + 1)}, q^{3 \cdot 2^{k+1} (2^{k-1} + 3)}, -1) \\
& + q^{2^{k-1} + 2} \theta_{3, 2^k}(q^{3 \cdot 2^{k-2} + 5}, q^{3 \cdot 2^{k-2} + 5}, q).
\end{aligned}
\end{equation}

Combining the first $m$ in (\ref{second1}) with the second $m$ in (\ref{second2}) and applying (\ref{m3}) yields

\begin{equation*}
-2q^{-\frac{2^{k-3}(2^{k-2}+1)}{3}} \bigl [ m(q^{2^{k+1} (2^{k-3} + 1)}, q^{3 \cdot 2^{k-1} (2^{k-1} + 3)}, z) + \xi(q^{2^{k+1} (2^{k-3} + 1)}, q^{3 \cdot 2^{k-1} (2^{k-1} + 3)}, z) \bigr ]
\end{equation*}

\noindent while the first $m$ in (\ref{second2}) with the second $m$ in (\ref{second1}) and (\ref{m3}) gives

\begin{equation*}
-2 q^{-3 \cdot 2^{k-3} (2^{k-2} + 1)} \bigl[ m(q^{2^{k-1}}, q^{3\cdot 2^{k-1} (2^{k-1} + 3)}, z) + \xi(q^{2^{k-1}}, q^{3\cdot 2^{k-1} (2^{k-1} + 3)}, z) \bigr ].
\end{equation*}

\noindent In total, we have

\begin{equation} \label{caseii}
\begin{aligned}
\mathcal{R}_2^{(k)}(q) & = -2q^{\frac{-2^{k-3}(2^{k-2}+1)}{3}} \bigl [ m(q^{2^{k+1}(2^{k-3} + 1)}, q^{3 \cdot 2^{k-1} (2^{k-1} + 3)}, z) 
 + \xi(q^{2^{k+1} (2^{k-3} + 1)}, q^{3 \cdot 2^{k-1} (2^{k-1} + 3)}, z) \bigr ] \\
& -2 q^{-3 \cdot 2^{k-3} (2^{k-2} + 1)} \bigl[ m(q^{2^{k-1}}, q^{3\cdot 2^{k-1} (2^{k-1} + 3)}, z) + \xi(q^{2^{k-1}}, q^{3\cdot 2^{k-1} (2^{k-1} + 3)}, z) \bigr ] \\
& + \frac{\theta_{3, 2^k}(q^{2^{k-2}+2},q^{2^{k-2}+2},q)}{j(q, q^3)} +  \frac{q^{2^{k-1} + 2} \theta_{3, 2^k}(q^{3 \cdot 2^{k-2} + 5}, q^{3 \cdot 2^{k-2} + 5}, q)}{j(q, q^3)}.
\end{aligned}
\end{equation}

\noindent One can similarly show that for $k$ even, we have

\begin{equation*} \label{1}
\begin{aligned}
f_{3, 2^{k}+ 3, 3}(q^{2^{k-2} + 2}, q^{2^{k-2} + 2}, q) 
&= -2 q^{\frac{-29\cdot2^{k-3} - 25\cdot2^{2k-5}}{3}} m(-q^{2^{k-1} (2^{k-1} + 5)}, q^{3 \cdot 2^{k+1} (2^{k-1} + 3)}, -1) \\
& + 2q^{-35\cdot2^{k-3} - 27\cdot2^{2k-5}} m(-q^{-2^{k-1}(3 \cdot 2^{k-1} + 7)}, q^{3 \cdot 2^{k+1} (2^{k-1} + 3)}, -1) 
\\ & + \theta_{3, 2^{k}}(q^{2^{k-2} + 2}, q^{2^{k-2} + 2}, q)
\end{aligned}
\end{equation*}

and

\begin{equation*} \label{2}
\begin{aligned}
q^{2^{k-1} + 2} f_{3, 2^{k} + 3, 3}&(q^{3\cdot 2^{k-2} + 5}, q^{3\cdot 2^{k-2} + 5}, q) \\
&= -2 q^{-3 \cdot 2^{k-3} (2^{k-2} + 1)}  m(-q^{2^{k-1}(3 \cdot 2^{k-1} + 11)}, q^{3 \cdot 2^{k+1}(2^{k-1} + 3)}, -1) \\
& + 2q^{\frac{-161\cdot2^{k-3} - 121\cdot2^{2k-5}}{3}} m(-q^{-2^{k-1} (5 \cdot 2^{k-1} + 13)}, q^{3 \cdot 2^{k+1}(2^{k-1} + 3)}, -1) \\
& + q^{2^{k-1} + 2} \theta_{3, 2^{k}}(q^{3 \cdot 2^{k-2} + 5}, q^{3 \cdot 2^{k-2} + 5}, q).
\end{aligned}
\end{equation*}

Arguing as in the odd case we obtain

\begin{equation}
\begin{aligned} \label{caseiibis}
\mathcal{R}_2^{(k)}(q) & = -2 q^{\frac{-29\cdot2^{k-3} - 25\cdot2^{2k-5}}{3}} \bigl [ m(q^{-2^k (2^{k-2} +1)}, q^{3\cdot 2^{k-1}(2^{k-1} + 3)}, z) 
+ \xi(q^{-2^k (2^{k-2}+1)}, q^{3\cdot 2^{k-1}(2^{k-1} + 3)}, z) \bigr] \\
& -2q^{-3 \cdot 2^{k-3} (2^{k-2} + 1)} \bigl [ m(q^{2^{k-1}}, q^{3\cdot 2^{k-1}(2^{k-1} + 3)}, z) + \xi(q^{2^{k-1}}, q^{3\cdot 2^{k-1}(2^{k-1} + 3)}, z) \bigr ] \\
& + \frac{\theta_{3, 2^k}(q^{2^{k-2}+2},q^{2^{k-2}+2},q)}{j(q, q^3)} +  \frac{q^{2^{k-1} + 2} \theta_{3, 2^k}(q^{3 \cdot 2^{k-2} + 5}, q^{3 \cdot 2^{k-2} + 5}, q)}{j(q, q^3)}.
\end{aligned}
\end{equation}

Now, for (\ref{main3}), apply Proposition \ref{keyprop} and let $q=q^2$, $c \to \infty$ and $b = q$ in \eqref{limitBailey} to get that

\begin{equation} \label{third}
\begin{aligned}
\mathcal{R}_3^{(k)}(q) & = \frac{(q;q^2)_{\infty}}{(q^2;q^2)_{\infty}}  \sum_{n \geq 0 \atop |j| \leq n} (-1)^{n+j} q^{(2^{k-1}+2)n^2 +(2^{k-1}+1)n - 2^{k-1}j^2}(1+q^{2n+1}) \\
& = \frac{(q;q^2)_{\infty}}{(q^2;q^2)_{\infty}}  \Biggl ( \sum_{n \geq 0} (-1)^n q^{(2^{k-1} + 2)n^2 + (2^{k-1} + 1)n} \sum_{j=-n}^{n} (-1)^j q^{-2^{k-1} j^2}  \\
& \hskip1in + q \sum_{n \geq 0} (-1)^n q^{(2^{k-1} + 2)n^2 + (2^{k-1} + 3)n} \sum_{j=-n}^{n} (-1)^j q^{-2^{k-1} j^2} \Biggr).
\end{aligned}
\end{equation}

Again we replace $n$ with $-n-1$ in the second sum of (\ref{third}), let $n=(r+s)/2$ and $j=(r-s)/2$, and apply (\ref{fprop1}) in the penultimate step to arrive at

\begin{equation*} 
\begin{aligned}
\mathcal{R}_3^{(k)}(q) & =  \frac{(q;q^2)_{\infty}}{(q^2;q^2)_{\infty}} \Biggl ( \Bigl(  \sum_{\substack{r, s \geq 0 \\ r \equiv s \imod{2} }} - \sum_{\substack{r, s < 0 \\ r \equiv {s \imod{2}} }} \Bigr) (-1)^{r-s} q^{\frac{1}{2} r^2 + (2^{k-1} + 1)rs + \frac{2^{k-1} + 1}{2} r + \frac{1}{2} s^2 + \frac{2^{k-1} + 1}{2}s}   \Biggr) \\
& =  \frac{(q;q^2)_{\infty}}{(q^2;q^2)_{\infty}} \Biggl ( f_{1, 2^{k-1} + 1, 1}(-q^{2^{k-1} + 3}, -q^{2^{k-1} + 3}, q^4) \\
&\hskip1in - q^{2^{k} + 3} f_{1, 2^{k-1} + 1, 1}(-q^{3 \cdot 2^{k-1} + 7}, -q^{3 \cdot 2^{k-1} + 7}, q^4) \Biggl) \\
& = \frac{(q;q^2)_{\infty}}{(q^2;q^2)_{\infty}} f_{1,2^{k-1}+1,1}(q^{2^{k-2}+1},-q^{2^{k-2}+1},q). \\
\end{aligned}
\end{equation*}

By Theorem \ref{hm}, (\ref{g}), (\ref{j1}) and (\ref{j2}), we have

\begin{equation*}
\begin{aligned}
& f_{1,2^{k-1}+1,1}(q^{2^{k-2}+1},-q^{2^{k-2}+1},q) \\
& = q^{-2^{k-3}(2^{k-2} + 1)} j(-1,q) m(q^{2^{k-2}},q^{2^{2k-2}+2^k},-1) + \theta_{1, 2^{k-1}}(q^{2^{k-2}+1},-q^{2^{k-2}+1},q)
\end{aligned}
\end{equation*} 

\noindent and so

\begin{equation} \label{caseiii}
\begin{aligned}
\mathcal{R}_3^{(k)}(q) & = 2 q^{-2^{k-3}(2^{k-2} + 1)} m(q^{2^{k-2}},q^{2^{2k-2}+2^k},-1) + \frac{2 \theta_{1, 2^{k-1}}(q^{2^{k-2}+1},-q^{2^{k-2}+1},q)}{j(-1,q)}.
\end{aligned}
\end{equation}

Finally, for (\ref{main4}), applying Proposition \ref{keyprop} and letting $b=\sqrt{q}$ and $c=-\sqrt{q}$ in \eqref{limitBailey}, we have that 

\begin{equation} \label{fourth}
\begin{aligned}
\mathcal{R}_4^{(k)}(q) &= \frac{(q;q^2)_{\infty}}{(q^2;q^2)_{\infty}}  \sum_{n \geq 0 \atop |j| \leq n} (-1)^{n+j} q^{((2^{k-1}+1)n^2 +(2^{k-1}+1)n)/2 - 2^{k-2}j^2} \\
&= \frac{(q;q^2)_{\infty}}{2(q^2;q^2)_{\infty}} \Biggl ( \sum_{n \geq 0 \atop |j| \leq n} (-1)^{n+j} q^{((2^{k-1}+1)n^2 +(2^{k-1}+1)n)/2 - 2^{k-2}j^2} \\
& \hskip1in +  \sum_{n \geq 0 \atop |j| \leq n} (-1)^{n+j} q^{((2^{k-1}+1)n^2 +(2^{k-1}+1)n)/2 - 2^{k-2}j^2} \Biggr). 
\end{aligned}
\end{equation}

Once more replace $n$ with $-n-1$ in the second sum of (\ref{fourth}), let $n=(r+s)/2$ and $j=(r-s)/2$ and apply (\ref{fprop2}) to get

\begin{equation*} 
\begin{aligned}
\mathcal{R}_4^{(k)}(q) &=  \frac{(q;q^2)_{\infty}}{2(q^2;q^2)_{\infty}} \Biggl ( \Bigl(  \sum_{\substack{r, s \geq 0 \\ r \equiv s \imod{2} }} - \sum_{\substack{r, s < 0 \\ r \equiv {s \imod{2}} }} \Bigr) (-1)^{r-s} q^{\frac{1}{8} r^2 + \frac{2^{k} + 1}{4} rs + \frac{2^{k-1} + 1}{4} r + \frac{1}{8} s^2 + \frac{2^{k-1} + 1}{4}s}   \Biggr) \\
& =\frac{(q;q^2)_{\infty}}{2(q^2;q^2)_{\infty}} \Bigl( f_{1,2^k+1,1}(-q^{2^{k-2}+1},-q^{2^{k-2}+1},q) \\
&\hskip1in - q^{2^{k-1}+1}f_{1,2^k+1,1}(-q^{3(2^{k-2}+1)-1},-q^{3(2^{k-2}+1)-1},q) \Bigr) \\
& =  \frac{(q;q^2)_{\infty}}{(q^2;q^2)_{\infty}} f_{1,2^k+1,1}(-q^{2^{k-2}+1},-q^{2^{k-2}+1},q). \\
\end{aligned}
\end{equation*}

By Theorem \ref{hm}, (\ref{g}) and (\ref{j1}), we have

\begin{equation*}
\begin{aligned}
f_{1,2^{k}+1,1}(-q^{2^{k-2}+1},-q^{2^{k-2}+1},q) 
& = 2 q^{-2^{k-3}(2^{k-2} + 1)} j(-1,q) m(-q^{2^{k-1}(2^{k-1} + 1)}, q^{2^{k+1}(2^{k-1} + 1)}, -1) \\ &+ \theta_{1, 2^{k}}(-q^{2^{k-2}+1},-q^{2^{k-2}+1},q)
\end{aligned}
\end{equation*} 

\noindent and so

\begin{equation} \label{caseiv}
\begin{aligned}
\mathcal{R}_4^{(k)}(q) & = 4 q^{-2^{k-3}(2^{k-2} + 1)} m(-q^{2^{k-1}(2^{k-1} + 1)}, q^{2^{k+1}(2^{k-1} + 1)}, -1) + \frac{2\theta_{1, 2^{k}}(-q^{2^{k-2}+1},-q^{2^{k-2}+1},q)}{j(-1,q)}.
\end{aligned}
\end{equation} 
\end{proof}

\section{Proof of Corollary \ref{mockid}}

To prove Corollary \ref{mockid} we will compare the Appell-Lerch sums $m(x,q,z)$ appearing in \eqref{casei}, \eqref{caseii}, and \eqref{caseiv} to the Appell-Lerch sums corresponding to the classical $q$-hypergeometric mock theta functions, as recorded in \cite{Hi-Mo1}.

\begin{proof}[Proof of Corollary \ref{mockid}]

We begin with (\ref{mockid1}).  Taking $k=3$ and $z=q^3$ in (\ref{casei}), we find that
$$
\mathcal{R}_1^{(3)}(-q) = 2q^{-1}\left[m(q^2,q^{12},-q^3) + \xi(q^2,q^{12},-q^3)\right] + \frac{\theta_{1,4}(-q^3,-q^3,q^2)}{j(-q,q^2)} - q^5\frac{\theta_{1,4}(-q^9,-q^9,q^2)}{j(-q,q^2)}.
$$
On the other hand, equation (4.9) of \cite{Hi-Mo1} says that
$$
\nu(q) = 2q^{-1}m(q^2,q^{12},-q^3) + \frac{J_1J_{3,12}}{J_2}.
$$
Thus the claim is equivalent to the identity
$$
2q^{-1} \xi(q^2, q^{12}, -q^3) + \frac{\theta_{1,4}(-q^3, -q^3, q^2)}{j(-q,q^2)} - \frac{q^5 \theta_{1,4}(-q^9, -q^9, q^2)}{j(-q, q^2)} = \frac{J_1 J_{3, 12}}{J_2}.
$$
This is a routine identity involving modular forms and functions and hence may be verified with a finite computation.   We carried this out using
F.G. Garvan's computer package (see \url{http://www.math.ufl.edu/~fgarvan/qmaple/theta-supplement/}).

Next, for (\ref{mockid2}) we take $k=4$ and $z=q^8$ in (\ref{casei}) to obtain 
$$
\mathcal{R}_1^{(4)}(q) = 2q^{-4}\left[m(q^4,q^{40},q^8) + \xi(q^4,q^{40},q^8)\right] + \frac{\theta_{1,8}(q^5,q^5,q^2)}{j(q,q^2)} + q^9\frac{\theta_{1,8}(q^{15},q^{15},q^2)}{j(q,q^2)}.
$$
On the other hand, equation (4.43) in \cite{Hi-Mo1} with $q=q^4$ reads
$$
\phi(q^4) = -2q^{-4}m(q^4,q^{40},q^8) + \frac{J_5J_{10}J_{4,10}}{J_{2,5}J_{2,10}}\Bigg |_{q=q^4}.
$$
Comparing these two equations establishes \eqref{mockid2} (and also provides an expression for $M_1(q)$).

Equations \eqref{mockid3} and \eqref{mockid5} are quite similar, so we just mention that \eqref{mockid3} follows from taking $k=3$ and $z=q^{12}$ in (\ref{caseii}) and comparing with equation (4.33) in \cite{Hi-Mo1}, while \eqref{mockid5} follows upon comparing \eqref{caseiv} with the case $q \to q^{2^{k-1}(2^{k-1} + 1)}$ of equation (4.3) in \cite{Hi-Mo1}.

\end{proof}

\section{Concluding Remarks}
We have described one way to use the Bailey machinery to produce families of $q$-hypergeometric multisums which are mock theta functions.   It would be interesting to find others. For example, one could study other change of bases found in \cite{bis}. It would also be interesting to establish simpler expressions for the modular forms $M_1(q)$, $M_2(q)$, and $M_3^{(k)}(q)$ occurring in Corollary \ref{mockid}. Although we did not address it here, it would also be natural to study asymptotics and congruences for the coefficients of $\mathcal{R}_i^{(k)}(q)$. Finally, it is mentioned in Section $8$ of \cite{St1} that some preliminary work has been done toward understanding the combinatorial significance of certain modular $q$-hypergeomatric multisums constructed using change of base lemmas for Bailey pairs.  These modular multsums resemble the $\mathcal{R}_i^{(k)}(q)$ and it would be nice to see this work carried out.

\section*{Acknowledgements} The second author would like to thank the Institut des Hautes {\'E}tudes
Scientifiques for their support during the preparation of this paper and Don Zagier for his comments. The second author was partially funded by Science Foundation Ireland 08/RFP/MTH1081.

\end{document}